\newcommand{\overbar}[1]{\mkern 1.5mu\overline{\mkern-1.5mu#1\mkern-1.5mu}\mkern 1.5mu}
\newcommand{\cupdot}{\mathbin{\mathaccent\cdot\cup}}
\theoremstyle{definition}
\newtheorem{theorem}{Theorem}[section]
\newtheorem{lemma}[theorem]{Lemma}
\newtheorem{conjecture}[theorem]{Conjecture}
\newtheorem{question}[theorem]{Question}
\title{On the average order of a dominating set of a forest}
\author{Aysel Erey}
\address{Department of Mathematics\\
Gebze Technical University\\
Kocaeli, Turkey}
\email{aysel.erey@gtu.edu.tr} 
 \date{\today}
\begin{document}

\begin{abstract}
We show that the average order of a dominating set of a forest graph $G$ on $n$ vertices with no isolated vertices is at most $2n/3$. Moreover, the equality is achieved if and only if every non-leaf vertex of $G$ is a support vertex with one or two leaf neighbors. Our result answers an open question of Beaton and Brown \cite{brown_beaton}.
\end{abstract}

\keywords{dominating set, average order, forest}
\subjclass[2010]{05C05, 05C31, 05C69}

\maketitle

\section{Introduction}

In this article all graphs are finite, simple, loopless and undirected. Given a graph $G$, let $V(G)$ and $E(G)$ be the vertex set and the edge set of $G$, respectively. The \textit{order} of $G$ is $|V(G)|$ and the \textit{size} of $G$ is $|E(G)|$. A vertex $u$ is a {\it neighbor} of vertex $v$ in $G$ if $u$ and $v$ are adjacent in $G$. The {\it open neighborhood of v}, $N_G(v)$, consists of all neighbors of $v$ in $G$, and the {\it closed neighborhood of v}, $N_G[v]$, is equal to $N_G(v)\cup \{v\}$.  A vertex $v$ of $G$ is called an {\it isolated vertex} of $G$ if $v$ has no neighbors in $G$.  For a subset $S$  of vertices of $G$,  let $G\setminus S$ denote the subgraph induced by the vertices of $V(G)\setminus S$ (if $S=\{v\}$ is a singleton, we simply write $G\setminus v$). If $H$ is a subgraph of $G$, we write $G\setminus H$ for the subgraph induced by $V(G)\setminus V(H)$ in $G$. Also, let $G/u$ be the graph obtained from $G$ by deleting the vertex $u$ and adding edges between all pairs of nonadjacent neighbors of $u$. The complete graph, empty graph and star graph on $n$ vertices are denoted by $K_n$, $\overbar{K}_n$  and $K_{1,n-1}$, respectively. A vertex of degree one is called a {\it leaf} and an edge containing a leaf vertex is called a {\it pendant edge}. Let $L_G(u)$ denote the set of all leaf neighbors of a vertex $u$ in $G$ and $L_G[u]=L_G(u)\cup\{u\}$. We say that $u$ is a {\it support vertex} of $G$ if $u$ is adjacent to a leaf vertex $v$ of $G$, and $u$ is called the {\it support of} $v$ in $G$. An acyclic graph is callled a \textit{forest} and an acyclic connected graph is called a \textit{tree}.

Average values of various graph invariants have been studied in the literature.  In $1971$, Doyle and Graver \cite{doyle} initiated the study of mean (average) distance in a graph which received a considerable attention. They gave a formula for computing the mean distance of trees, and determined extremal graphs with maximum or minimum mean distance among connected graphs of a given order \cite{doyle}. Extremal problems for the mean distance were also examined in certain other classes of graphs \cite{rautenbach}, \cite{hendry}. Chung \cite{chung_indep} showed that the independence number is an upper bound the mean distance and  Dankelmann \cite{dankelmann} proved upper bounds for it in terms of the so called $k$-packing number.  Average eccentricty (a distance related notion), was recently studied in \cite{dankelmann}. Average order of  a subtree of a graph was introduced by Jamison~\cite{jamison} in $1983$ and this invariant have been investigated by a number of researchers, see, for example, \cite{cameron, mol, vince, wagner}. Moreover, the average connectivity of a graph was  considered in \cite{avg_conn}. Most recently, Andriantiana et al. \cite{and_1,and_2} studied the average sizes of independent sets and matchings. Lastly, The average distance \cite{chung} and the average size of independent sets \cite{davies} were also studied in the context of random graphs.

Our focus in this paper will be on the average order of a dominating set of a graph. A subset of vertices $S$ is called a {\it dominating set} of $G$ if every vertex in $V(G)\setminus S$ is adjacent to some vertex in $S$. Let $\mathcal{D}(G)$ denote the family of all dominating sets of $G$. Recently Beaton and Brown \cite{brown_beaton} introduced the \textit{average order of a dominating} set of $G$, denoted $\operatorname{avd}(G)$, which is given by
$$\operatorname{avd}(G)=\dfrac{\sum\limits_{S\in \mathcal{D}(G)}|S|}{|\mathcal{D}(G)|}.$$
They showed that the complete graph $K_n$ uniquely minimizes the average order of a dominating set among all graphs on $n$ vertices \cite{brown_beaton}. It is trivial that $\operatorname{avd}(\overbar{K}_n)=n\geq \operatorname{avd}(G)$ for every graph $G$ on $n$ vertices. So, the empty graph $\overbar{K}_n$ has the largest average order of a dominating set among graphs on $n$ vertices. What if we do not allow isolated vertices?  Which graphs have the largest average order of a dominating set among all graphs of order $n$ without isolated vertices? While this question was examined in \cite{brown_beaton}, the question remained unanswered in general.  It was shown that $\operatorname{avd}(G)\leq \frac{3}{4}n$ for every graph $G$ of order $n$ without isolated vertices \cite{brown_beaton}. However the factor $3/4$ in the upper bound is not best possible, and the evidence provided in \cite{brown_beaton} suggests that the constant $3/4$ can be improved to a smaller number. Indeed, the following conjecture was proposed.
\begin{conjecture}\label{brown_conj}\cite{brown_beaton}
If $G$ is a graph of order $n$ with no isolated vertices, then $\operatorname{avd}(G)\leq \frac{2n}{3}.$
\end{conjecture}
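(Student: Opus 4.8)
The plan is to establish the bound for arbitrary isolated-vertex-free graphs by converting it into a statement about vertex membership probabilities and then controlling those probabilities by a local identity together with a discharging argument. Since the domination polynomial is multiplicative over connected components and both $\operatorname{avd}$ and the target $2n/3$ are additive over components, it suffices to treat a connected graph $G$ on $n\geq 2$ vertices. Choose $S\in\mathcal{D}(G)$ uniformly at random and put $p_v=\Pr[v\in S]$; by linearity
$$\operatorname{avd}(G)=\sum_{v\in V(G)}p_v,$$
so the conjecture is equivalent to the average of the $p_v$ being at most $2/3$. Two elementary facts anchor the argument. First, $S\mapsto S\cup\{v\}$ injects the dominating sets avoiding $v$ into those containing it, so $p_v\geq 1/2$ for every $v$. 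Second, if $v$ is a leaf with support $u$, then conditioning on $u$ yields the exact \emph{leaf identity} $p_v=1-\tfrac12 p_u$: if $u\notin S$ then $v$ is forced into $S$, while if $u\in S$ then $v$ may be added or removed freely and so lies in $S$ with conditional probability $\tfrac12$.

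First I would use the leaf identity to isolate the extremal gadgets. A support $u$ carrying exactly two leaves $v,w$ satisfies $p_u+p_v+p_w=p_u+(2-p_u)=2$ irrespective of the rest of $G$, so such a triple contributes exactly $2/3$ per vertex and is self-balancing; a support with a single leaf $v$ contributes $p_u+p_v=1+\tfrac12 p_u$, which exceeds $4/3$ exactly when $p_u>2/3$. To organize the global count I would assign each vertex the charge $p_v-\tfrac23$, reducing the goal to $\sum_v\bigl(p_v-\tfrac23\bigr)\leq 0$. Two-leaf gadgets carry total charge zero and may be set aside. The provable sources of positive charge are single-leaf supports with $p_u>2/3$ and ``core'' vertices $w$ (non-leaves surviving the removal of all leaves) with $p_w>2/3$; the leaf of a heavy support already absorbs part of its support's charge, since it then has charge $\tfrac13-\tfrac12 p_u<0$, but a residual excess of up to $\tfrac12 p_u-\tfrac13\leq\tfrac16$ per such pair remains. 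The discharging step must route this residual, and the excess of heavy core vertices, onto vertices with $p_v<\tfrac23$, each of which, by $p_v\geq\tfrac12$, offers a reservoir of at most $\tfrac16$.

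The crux — and the step I expect to be the genuine obstacle — is bounding the membership probabilities of core vertices, where no leaf identity is available and, once cycles are present, the recursive structure that makes the forest case tractable disappears. I would attempt an induction that deletes a closed neighborhood $N_G[u]$ of a carefully chosen support vertex (or of a vertex on a shortest cycle) and relates $\mathcal{D}(G)$ to the dominating sets of the smaller graph through a deletion recurrence for the domination polynomial, using the inductive hypothesis to bound the surviving $p_w$. The principal difficulty is that, unlike the recurrences for the chromatic or independence polynomials, the natural domination reductions neither preserve the no-isolated-vertex hypothesis nor interact transparently with the $p_v$: a single deletion can depress many probabilities at once and can strand isolated vertices, and ruling out a hypothetical cluster of heavy single-leaf supports whose residual excess has no low-probability vertices to absorb it is precisely what the argument must exclude.

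Finally, I would pursue in parallel a cleaner potential shortcut: a monotonicity reduction to the already-available forest case. If one could prove that $\operatorname{avd}(G+e)\leq\operatorname{avd}(G)$ whenever $G$ and $G+e$ are both isolated-vertex-free, then passing from $G$ to a spanning tree of each component would give $\operatorname{avd}(G)\leq 2n/3$ at once. While the inclusion $\mathcal{D}(G)\subseteq\mathcal{D}(G+e)$ holds automatically, the average can a priori rise when the newly created dominating sets are large, so this monotonicity is delicate and may fail; I would treat it as a conjectural lemma rather than a load-bearing step.
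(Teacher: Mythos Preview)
This statement is \emph{not proved} in the paper: it is the open Conjecture~1.1 of Beaton and Brown, and the paper establishes it only for forests (Theorem~\ref{main}). There is therefore no proof in the paper against which to compare your attempt.

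Your proposal is likewise not a proof, and you say as much. The probabilistic reformulation via $p_v=\Pr[v\in S]$, the bound $p_v\ge\tfrac12$, and the leaf identity $p_v=1-\tfrac12 p_u$ are all correct, and the observation that a support with exactly two leaves contributes $p_u+p_v+p_w=2$ is a clean way to see why the extremal configurations in Theorem~\ref{main} are tight. But the discharging scheme is only set up, not executed: you correctly isolate the obstruction --- controlling $p_w$ for non-support ``core'' vertices once cycles are present --- and then stop, proposing a closed-neighbourhood deletion without verifying that the resulting inequalities close up or that the no-isolated-vertex hypothesis survives. Your alternative route, the edge-monotonicity $\operatorname{avd}(G+e)\le\operatorname{avd}(G)$, would indeed reduce the full conjecture to the forest case already in hand, but that monotonicity is itself open; its existential weakening (that \emph{some} non-pendant edge $e$ satisfies $\operatorname{avd}(G)<\operatorname{avd}(G\setminus e)$) is precisely Question~\ref{my_ques} of the present paper and a conjecture of Beaton and Brown. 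In short, both you and the paper leave the general conjecture unresolved; your outline is a sensible plan of attack, but neither branch is carried through, and it should not be presented as a proof.
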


Conjecture~\ref{brown_conj} was verified for all graphs up to $9$ vertices, all graphs with minimum degree at least $4$, and  all quasi-regularizable graphs (which include all graphs containing a perfect matching or a hamiltonian cycle) \cite{brown_beaton}. Beaton and Brown \cite{brown_beaton} also studied such extremal problems within the family of trees. They showed that for every tree graph $G$ of order $n$ with $G\ncong K_{1,n-1}$,
$$\operatorname{avd}(G)>\frac{n-1+2^{n-2}(n+1)}{2^{n-1}+1}=\operatorname{avd}(K_{1,n-1})$$
and hence the star graph $K_{1,n-1}$ is the unique extremal graph with minimum average order of a dominating set. On the other hand, the problem of determining extremal graphs maximizing this parameter among trees remained as an open problem. In this article, we settle this problem by proving the following:

\begin{theorem}\label{tree_main}
If $G$ is a tree of order $n\geq 2$, then $\operatorname{avd}(G)\leq \frac{2n}{3}.$  Moreover, the equality holds if and only if every non-leaf vertex of $G$ is a support vertex with one or two leaf neighbors. 
\end{theorem}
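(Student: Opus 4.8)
The plan is to prove, by strong induction on $n$, the formally stronger statement that \emph{every forest $G$ on $n$ vertices with no isolated vertex satisfies $\operatorname{avd}(G)\le 2n/3$, with equality exactly when every non-leaf vertex of $G$ is a support vertex with one or two leaf neighbours}; Theorem~\ref{tree_main} is the case where $G$ is connected. Passing to forests is convenient because the reductions below may disconnect $G$, and it is painless: if $G=G_1\cupdot G_2$ then, writing $d_i=|\mathcal D(G_i)|$ and $s_i=\sum_{S\in\mathcal D(G_i)}|S|$, one has $|\mathcal D(G)|=d_1d_2$ and $\sum_{S\in\mathcal D(G)}|S|=d_2s_1+d_1s_2$, so the quantity $\phi(G):=2|V(G)|\,|\mathcal D(G)|-3\sum_{S\in\mathcal D(G)}|S|$ obeys $\phi(G)=d_2\,\phi(G_1)+d_1\,\phi(G_2)$ with $d_i\ge 1$; since the structural condition is also componentwise, the whole problem reduces to connected forests, i.e.\ trees. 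The base cases $n\le 3$ (namely $K_2$ and $P_3$, both extremal) are checked directly.

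For the inductive step on a tree $G$ with $n\ge 4$, I would fix a longest path and let $v$ be its second vertex: then $v$ is a support vertex, and by maximality of the path every neighbour of $v$ is a leaf with at most one exception, which I call $u$. If $v$ has no non-leaf neighbour then $G=K_{1,n-1}$, and the quoted formula for $\operatorname{avd}(K_{1,n-1})$ gives, after clearing denominators, the inequality $(3-n)(2^{n-2}-1)\le 0$, valid for $n\ge 2$ with equality only when $n\le 3$ — consistent with the characterization, since for $n\ge 4$ the centre has $n-1\ge 3$ leaf neighbours. Otherwise put $p=|L_G(v)|\ge 1$ and $G_0=G\setminus L_G[v]$, a smaller forest without isolated vertices. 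Splitting a dominating set $S$ of $G$ according to whether $v\in S$ — if $v\in S$ the $p$ leaves are unconstrained and $S\cap V(G_0)$ need only dominate $V(G_0)\setminus\{u\}$ (as $u$ is dominated by $v$), while if $v\notin S$ all $p$ leaves lie in $S$ and $S\cap V(G_0)$ must dominate $G_0$ outright — yields the identity
\[
D(G,x)=x(1+x)^p\,\overline D(G_0;u,x)+x^p\,D(G_0,x),
\]
where $D(H,x)=\sum_{S\in\mathcal D(H)}x^{|S|}$ and $\overline D(H;u,x)=\sum x^{|S|}$ over the subsets $S\subseteq V(H)$ that dominate $V(H)\setminus\{u\}$. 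Evaluating this identity and its $x$-derivative at $x=1$ expresses $|\mathcal D(G)|$ and $\sum_{S\in\mathcal D(G)}|S|$ through the plain and ``$u$-exempt'' versions of the same data for $G_0$.

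Next I would substitute the inductive bounds for $G_0$ in both versions — the plain one, $\sum|S|\le\tfrac23|V(G_0)|\,|\mathcal D(G_0)|$, and its $u$-exempt analogue — together with the trivial inequality $|\mathcal D(G_0)|\le|\overline{\mathcal D}(G_0;u)|$. After simplification, using $|V(G_0)|=n-p-1$, the target $\operatorname{avd}(G)\le 2n/3$ reduces to an elementary inequality whose surplus is bounded below by $\tfrac13(p-2)\bigl(2^{p-1}|\overline{\mathcal D}(G_0;u)|-|\mathcal D(G_0)|\bigr)$. For $p\ge 2$ this is nonnegative, so the step goes through, and for $p\ge 3$ it is strictly positive, giving $\operatorname{avd}(G)<2n/3$ — again matching the characterization. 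The one genuinely delicate case is $p=1$, where the crude estimate is off by exactly $\tfrac13\bigl(|\overline{\mathcal D}(G_0;u)|-|\mathcal D(G_0)|\bigr)$ in the wrong direction. Here the ``extra'' subsets (those dominating $V(G_0)\setminus\{u\}$ but not $u$) all avoid $N_{G_0}[u]$, and they are precisely the dominating sets of an explicit smaller forest — obtained from $G_0$ by deleting $N_{G_0}[u]$ and attaching, for each $w\in N_{G_0}(u)$, a new pendant vertex joined to $N_{G_0}(w)\setminus\{u\}$ — so one more application of the inductive hypothesis to this forest (after peeling off any isolated vertices it contains) should supply exactly the missing slack. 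Getting this to balance — that is, propagating through the induction a sufficiently precise package of inequalities for ``forests with one exempt vertex'', and keeping the isolated-vertex corrections under control in the $p=1$ case (equivalently, treating separately the situation where $u$ has several leaf neighbours) — is the step I expect to be the main obstacle.

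The equality analysis is threaded through the same induction. Equality for $G$ forces $p\in\{1,2\}$, forces the surpluses above to vanish (so in the $p=1$ case $u$ is dominated by \emph{every} subset dominating the rest, which pins down the local structure at $u$), and forces $G_0$ to be extremal; by the inductive hypothesis every non-leaf vertex of $G_0$ — hence, together with $v$, of $G$ — is then a support vertex with one or two leaf neighbours. Conversely, every tree with that property is built from $K_2$ or $P_3$ by repeatedly attaching a fresh vertex carrying one or two pendant leaves, and the displayed recursion with $p\in\{1,2\}$ shows that the relation $\operatorname{avd}=\tfrac23\cdot(\text{order})$ is preserved at each such step, which completes the characterization.
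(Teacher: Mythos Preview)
Your overall architecture---strong induction on forests, additivity of $\phi(G)=2n\,D_G(1)-3D'_G(1)$ over components, and peeling off a deepest support vertex $v$ together with its $p$ leaves---is exactly the paper's. Your recursion $D_G(x)=x(1+x)^p\,\overline D(G_0;u,x)+x^pD_{G_0}(x)$ is correct and, after the identity $(1+x)\,\overline D(G_0;u,x)=D_{G_1}(x)+D_{G_0\setminus u}(x)$ (where $G_1$ is $G_0$ with a single leaf attached at $u$), it is literally the paper's equation~(4).

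The genuine gap is what you yourself flag: the ``$u$-exempt analogue'' $3\,\overline D'(1)\le 2|V(G_0)|\,\overline D(1)$ is not part of your inductive hypothesis, and you need it already for $p\ge 2$, not only for $p=1$. Strengthening the induction to carry such a statement for ``forests with one exempt vertex'' is not obviously feasible---the exempt family is not the dominating-set family of any smaller forest in general, and your proposed auxiliary forest (delete $N_{G_0}[u]$ and hang pendants on second neighbours) can acquire isolated vertices in ways that undo the bookkeeping.

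The paper sidesteps the exempt polynomial entirely by a case split on $u$. If $u$ is itself a support vertex in $G_0$, a direct computation (Lemma~\ref{support_lemma}) expresses $D_G$ purely through $D_{G_0}$ and the induction closes with no exempt term. If $u$ is \emph{not} a support vertex, then $G_1$, $G_0$, and $G_0\setminus u$ are all forests without isolated vertices, so the plain inductive hypothesis applies to each, and $\phi(G)\ge 0$ reduces to the single combinatorial inequality
\[
(p-2)\,D_{G_0}(1)\ \le\ 2^{p-2}\bigl[(p-3)\,D_{G_1}(1)+(p+5)\,D_{G_0\setminus u}(1)\bigr].
\]
This is then dispatched for each $p$ using three explicit estimates (Lemma~\ref{3_item_lemma}): $D_{G_1}(1)\le D_{G_0}(1)+3D_{G_0\setminus u}(1)$, $D_{G_1}(1)\le 5D_{G_0\setminus u}(1)$, and $D_{G_0}(1)\le 3D_{G_0\setminus u}(1)$. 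The key structural fact enabling these is that, by maximality of the chosen depth, $u$ has at most one neighbour in $G_0$ that is not a support vertex---this is precisely the local information you would need to make your $p=1$ salvage work, and the paper packages it into those three inequalities rather than into a strengthened induction.
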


We actually prove Conjecture~\ref{brown_conj} for forests, see our Theorem~\ref{main} which immediately implies Theorem~\ref{tree_main}.

\section{Preliminaries}

The \textit{domination number} $\gamma (G)$ of a graph $G$ is the cardinality of a minimum dominating set of $G$. Let $d_k(G)$ be the number of dominating sets of $G$ with cardinality $k$. The \textit{domination polynomial} of $G$, denoted by $D_G(x)$, is given by
$$D_G(x)=\sum\limits_{k=\gamma (G)}^{|V(G)|}d_k(G)\, x^k.$$

Observe that $\operatorname{avd}(G)$ is equal to the logarithmic derivative of $D_G(x)$ evaluated at $x=1$, that is,
$$\operatorname{avd}(G)=\dfrac{D'_G(1)}{D_G(1)}.$$

 Let $H_1,\dots , H_c$ be the connected components of $G$. It is well known that 
$$D_G(x)=\prod\limits_{i=1}^{c}D_{H_i}(x),$$
and moreover, it was observed in \cite{brown_beaton} that
$$\operatorname{avd}(G)=\sum\limits_{i=1}^{c}\operatorname{avd}(H_i).$$

Let  $V(G)=\{v_1,\dots , v_n\}$ and let $G(v_1^{k_1},v_2^{k_2},\dots , v_n^{k_n})$ be the graph obtained from $G$ by adding $k_i$ leaves to each vertex $v_i$ for $i\in\{1,\dots ,n\}$. For $k_1,\dots , k_n>0$,  it was observed in \cite{akbari} and  \cite{oboudi}  that
$$D_{G(v_1^{k_1},v_2^{k_2},\dots , v_n^{k_n})}(x)=\prod\limits_{i=1}^nD_{K_{1,k_i}}(x).$$

Thus, $\operatorname{avd}(G(v_1^{k_1},v_2^{k_2},\dots , v_n^{k_n}))=\sum\limits_{i=1}^n\operatorname{avd}(K_{1,k_i})$. In particular, if $G'$ is obtained from $G$ by adding one leaf to each of $p$ vertices of $G$ and two leaves to each of $q$ vertices of $G$, where $|V(G)|=p+q$, then
$$\operatorname{avd}(G')=p\operatorname{avd}(K_{1,1})+q\operatorname{avd}(K_{1,2})=\frac{4}{3}p+2q=\frac{2}{3}(2p+3q)=\frac{2}{3}|V(G')|.$$
Thus, we see that the extremal graphs mentioned in Theorem~\ref{tree_main} indeed achieve the given upper bound.

We will also make use of the following recursive formula for the domination polynomials of graphs containing vertices with nested closed neighborhoods. 

\begin{lemma}\cite{kotek}\label{recur} Let $u$ and $v$ be two vertices of $G$ such that $N_G[v]\subseteq N_G[u]$. Then,
$$D_G(x)=xD_{G/u}(x)+D_{G\setminus u}(x)+xD_{G\setminus N_G[u]}(x).$$
In particular, if $v$ is a leaf vertex and $u$ is its neighbor, then
$$D_G(x)=x\left[D_{G/u}(x)+D_{G\setminus \{u,v\}}(x)+D_{G\setminus N_G[u]}(x)\right].$$
\end{lemma}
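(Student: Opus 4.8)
The plan is to establish the general identity by partitioning the family $\mathcal{D}(G)$ of dominating sets of $G$ according to the status of the vertex $u$, recording cardinalities via $D_G(x)=\sum_{S\in\mathcal{D}(G)}x^{|S|}$. Write $N=N_G(u)$ and $W=V(G)\setminus N_G[u]$, so that $V(G)=\{u\}\cupdot N\cupdot W$. The hypothesis $N_G[v]\subseteq N_G[u]$ (with $v\neq u$) forces $v\in N$ and $N_G(v)\cap W=\emptyset$; this last observation is the only use of $v$, and it is precisely what makes the recursion hold.

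First I would dispose of the dominating sets $S$ with $u\notin S$. Such an $S$ is a subset of $V(G\setminus u)$ that dominates $G\setminus u$; conversely, any $S\in\mathcal{D}(G\setminus u)$ already dominates $G$, because it automatically meets $N$ and hence dominates $u$. Indeed $v$ is dominated in $G\setminus u$, so either $v\in S$, or some $w\in N_{G\setminus u}(v)\cap S$; since $N_G(v)\setminus\{u\}\subseteq N_G[u]\setminus\{u\}=N$, in both cases $S\cap N\neq\emptyset$. Thus the sets avoiding $u$ contribute exactly $D_{G\setminus u}(x)$.

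Next I would handle the dominating sets $S\ni u$, writing $S=\{u\}\cup T$ with $T\subseteq N\cup W$ and $|S|=|T|+1$. Because $u$ dominates $N_G[u]$, one checks that $S$ dominates $G$ if and only if every vertex of $W$ lies in $T$ or has a $G$-neighbour in $T$. I would then split on whether $T\cap N=\emptyset$. If $T\subseteq W$, that condition says exactly that $T$ dominates $G[W]=G\setminus N_G[u]$, which accounts for $x\,D_{G\setminus N_G[u]}(x)$. If $T\cap N\neq\emptyset$, I would compare with $G/u$: for $w\in W$ the neighbourhoods of $w$ in $G$ and in $G/u$ coincide (deleting $u\notin N_G(w)$ and adding clique edges inside $N$ changes nothing), while the clique on $N$ together with $T\cap N\neq\emptyset$ makes every vertex of $N$ dominated in $G/u$; hence such $T$ are precisely the dominating sets of $G/u$ that meet $N$. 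The crucial point --- and the only place the vertex $v$ is used --- is that \emph{every} dominating set $T$ of $G/u$ must meet $N$: since $N_G(v)\cap W=\emptyset$ and $v\in N$, in $G/u$ all neighbours of $v$ lie in $N$, so dominating $v$ forces $T\cap N\neq\emptyset$. Therefore the sets $S\ni u$ with $T\cap N\neq\emptyset$ contribute exactly $x\,D_{G/u}(x)$, and summing the three contributions yields the stated formula. For the "in particular" case, when $v$ is a leaf with neighbour $u$ the hypothesis is immediate, $v$ becomes isolated in $G\setminus u$ so $D_{G\setminus u}(x)=x\,D_{G\setminus\{u,v\}}(x)$, and factoring out $x$ gives the displayed identity.

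I expect the main obstacle to be the bookkeeping in the case $u\in S$: verifying that $S\mapsto S\setminus\{u\}$ really gives bijections onto $\mathcal{D}(G/u)$ and $\mathcal{D}(G\setminus N_G[u])$ compatible with cardinalities, and in particular confirming cleanly that no dominating set of $G/u$ avoids $N$ --- this is exactly where the hypothesis $N_G[v]\subseteq N_G[u]$ does its work and is easy to miss. The degenerate situations ($W=\emptyset$, or $N_G[u]=V(G)$) should be checked against the convention $D_{\overbar{K}_0}(x)=1$, but they are routine.
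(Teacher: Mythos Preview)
The paper does not supply its own proof of this lemma; it is quoted from \cite{kotek} and used as a black box. Your argument is correct and is essentially the standard combinatorial justification: partition $\mathcal{D}(G)$ according to whether $u\in S$, and in the case $u\in S$ further split on whether $S\setminus\{u\}$ meets $N_G(u)$, identifying the three pieces with $\mathcal{D}(G\setminus u)$, $\mathcal{D}(G/u)$, and $\mathcal{D}(G\setminus N_G[u])$ respectively. You have correctly isolated the two places where the hypothesis $N_G[v]\subseteq N_G[u]$ is needed --- that any $S\in\mathcal{D}(G\setminus u)$ automatically dominates $u$, and that every $T\in\mathcal{D}(G/u)$ meets $N_G(u)$ --- and the derivation of the ``in particular'' clause via $D_{G\setminus u}(x)=xD_{G\setminus\{u,v\}}(x)$ is fine.
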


Lastly, observe that if $H$ is a subgraph of a graph $G$, then $D_H(1)\leq D_G(1)$ because the function $f:\mathcal{D}(H)\rightarrow \mathcal{D}(G)$ defined by $f(S)=S\cup (V(G)\setminus V(H))$ is clearly one-to-one.
\section{Main Result}

In the proof of the following result, let $d_k(G,u)$ (respectively $d_k(G,\overbar{u}$) denote the number of dominating sets of $G$ of order $k$ which contain the vertex $u$ (respectively do not contain the vertex $u$). Clearly, $d_k(G)=d_k(G,u)+d_k(G,\overbar{u})$ for every vertex $u$.

\begin{lemma}\label{support_lemma} Let $G$ be a graph of order $n$ and $w$ be a support vertex of $G$ with $L_G(w)=\{v_1,\dots ,v_t\}$ for some integer $t\geq 1$. Let also $H=G\setminus L_G[w]$.  Suppose that every vertex $u$ in $N_G(w)\setminus L_G(w)$ is a support vertex in $G$, and $3D'_H(1)\leq 2(n-t-1)D_H(1)$. Then, 
$$3D'_G(1)\leq 2nD_G(1)$$
with equality if and only if $t\in\{1,2\}$ and $3D'_H(1)=2(n-t-1)D_H(1)$.
\end{lemma}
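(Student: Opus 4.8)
The plan is to express $D_G(x)$ in terms of $D_H(x)$ and a few related polynomials using the leaf-deletion form of Lemma~\ref{recur}, and then to convert the target inequality $3D'_G(1)\le 2nD_G(1)$ into an inequality purely among evaluations at $x=1$ of domination polynomials of subgraphs of $H$. First I would handle the star-neighbourhood structure: since $w$ has $t$ leaf neighbours $v_1,\dots,v_t$ and every non-leaf neighbour of $w$ is itself a support vertex, I would analyse dominating sets $S$ of $G$ by cases according to whether $w\in S$ and which of the $v_i$ lie in $S$. If $w\in S$, the $v_i$ are free, contributing a factor accounting for all $2^t$ choices; if $w\notin S$, then every $v_i$ must be in $S$ (a leaf not in $S$ forces its support in $S$), so all $t$ leaves are forced in, and $w$ must be dominated by some neighbour $u\in N_G(w)\setminus L_G(w)$ lying in $S$. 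This yields a clean decomposition
$$D_G(x) = x^t D_H(x)\,(1+x)^t \big/ \text{(correction)} \;+\; x^t\big(D_H(x)-D_{H,\overline{N}}(x)\big),$$
where I will need to be careful about the domination condition on vertices of $H$ adjacent to $w$. Concretely, I expect $D_G(x) = x^t(1+x)^t\,P(x) + x^t\,Q(x)$ for suitable nonnegative-coefficient polynomials $P,Q$ built from $D_H$ and domination polynomials of $H$ minus closed neighbourhoods of the $u$'s, with $P(x)+Q(x)$ related to $D_H(x)$.

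**Reducing to a logarithmic-derivative inequality.**

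Once $D_G$ is written in product/sum form, I would take the logarithmic derivative at $x=1$. Recall $\operatorname{avd}(G) = D_G'(1)/D_G(1)$, and the desired conclusion is $\operatorname{avd}(G)\le 2n/3$, i.e. $3D_G'(1)\le 2nD_G(1)$. The hypothesis $3D_H'(1)\le 2(n-t-1)D_H(1)$ says $\operatorname{avd}(H)\le \tfrac23(n-t-1)$ (interpreting the empty-graph/zero case appropriately). Writing $D_G(1)$ and $D_G'(1)$ in terms of the pieces, the factor $x^t$ contributes $t$ to the average order and the factor $(1+x)^t$ contributes $t\cdot\tfrac12$ on the branch where it appears; averaging over the two branches with weights proportional to $P(1)(\text{or }2^tP(1))$ and $Q(1)$, I would get an expression of the shape
$$\operatorname{avd}(G) = t + \frac{\alpha\cdot(t/2) + (\text{avd of an $H$-type graph weighted})}{\alpha + \beta}$$
for appropriate nonnegative $\alpha,\beta$. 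The key numeric facts are $\operatorname{avd}(K_{1,t}) = 2n/3$ exactly when $t\in\{1,2\}$ and strictly less otherwise (this is the $G=$ single vertex case and is essentially the computation $\operatorname{avd}(K_{1,1})=4/3$, $\operatorname{avd}(K_{1,2})=2$), together with $D_H'(1)/D_H(1)\le \tfrac23(n-t-1)$. Combining these via the mediant/weighted-average inequality (a weighted average of quantities each $\le \tfrac23 m_i$ is $\le \tfrac23$ of the weighted average of the $m_i$) should give $3D_G'(1)\le 2nD_G(1)$.

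**The equality analysis and the main obstacle.**

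For the equality statement I would track when each inequality used is tight: the weighted-average step is tight only if every branch meeting the bound with equality has the right ratio, which forces $\operatorname{avd}(K_{1,t}) = 2n/3$ hence $t\in\{1,2\}$, and forces $3D_H'(1) = 2(n-t-1)D_H(1)$; conversely if both hold one checks equality propagates back up. I expect the main obstacle to be getting the decomposition of $D_G(x)$ genuinely clean: the subtlety is that when $w\notin S$, the neighbours $u$ of $w$ that are put into $S$ to dominate $w$ may or may not already be forced in by \emph{their own} leaf neighbours, and vertices of $H$ adjacent to several such $u$'s can be double-counted. The hypothesis that \emph{every} $u\in N_G(w)\setminus L_G(w)$ is a support vertex is exactly what tames this — each such $u$ has its own leaf, so one can argue about domination of $u$ locally — but turning "every non-leaf neighbour of $w$ is a support vertex" into a manageable algebraic identity, rather than a messy inclusion–exclusion over subsets of $N_G(w)$, is where the real care is needed. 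A cleaner alternative I would try first: induct by peeling off one leaf $v_1$ of $w$ via Lemma~\ref{recur}, since $G\setminus\{w,v_1\}$, $G/w$, and $G\setminus N_G[w]$ are all smaller, reducing the general $t$ to repeated application of the $t=1$ case; this may avoid the global subset sum entirely and is likely the path of least resistance.
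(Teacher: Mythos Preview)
There is a genuine error in your case analysis. When $w\notin S$, you correctly note that every leaf $v_i$ must lie in $S$; but you then write ``$w$ must be dominated by some neighbour $u\in N_G(w)\setminus L_G(w)$ lying in $S$.'' This is false: the leaves $v_1,\dots,v_t$ are all in $S$ and are adjacent to $w$, so they already dominate $w$. No non-leaf neighbour is needed, and the ``messy inclusion--exclusion over subsets of $N_G(w)$'' you anticipate never arises.

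The hypothesis that every $u\in N_G(w)\setminus L_G(w)$ is a support vertex plays a different role, one you do not identify: it guarantees that for \emph{every} dominating set $S$ of $G$ (whether or not $w\in S$), the restriction $S\cap V(H)$ is a dominating set of $H$. Indeed, the only vertices of $H$ whose domination within $H$ could be in doubt are those $u$ adjacent to $w$; but each such $u$ has a leaf $\ell\in V(H)$, and since $\ell$ must be dominated in $G$ we have $\ell\in S$ or $u\in S$, so $u$ is dominated inside $H$. Conversely, any dominating set of $H$ together with any dominating set of the star on $\{w,v_1,\dots,v_t\}$ is a dominating set of $G$. Hence
\[
D_G(x)=D_{K_{1,t}}(x)\,D_H(x)=\bigl[x(1+x)^t+x^t\bigr]D_H(x),
\]
which is precisely the identity the paper obtains (it splits on membership of $v_t$ rather than $w$, but the resulting formula is the same). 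From here $\operatorname{avd}(G)=\operatorname{avd}(K_{1,t})+\operatorname{avd}(H)$, and the conclusion follows from the hypothesis on $H$ together with the direct check that $\operatorname{avd}(K_{1,t})\le\tfrac{2}{3}(t+1)$ with equality iff $t\in\{1,2\}$. Your proposed leaf-peeling induction is therefore unnecessary: once the error is fixed the decomposition is already as clean as possible.
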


\begin{proof}
Let $S$ be a dominating set of $G$ of order $k$. If $v_t\notin S$, then $w$ must be in $S$, as $v_t$ is a leaf. Since every vertex in $N_G(w)\setminus L_G(w)$ is a support vertex of $G$, the vertex subset $S\cap V(H)$ must be a dominating set of $H$. Hence, if $t=1$, then $d_k(G,\overbar{v_t})=d_{k-1}(H)$. Also, if $t>1$, then $S\cap V(H)$ is a dominating set of $H$ on $k-1-i$ vertices where $i$ is the number of vertices in $S\cap\{v_1,\dots ,v_{t-1}\}$. Therefore,
$$d_k(G,\overbar{v_t})=\sum_{i=0}^{t-1}{t-1 \choose i}d_{k-1-i}(H)$$
where ${r\choose 0}=1$ for every integer $r\geq 0$. Similarly, one can check that the number of dominating sets of $G$ of order $k$ which contain $v_t$ but not $w$ is $d_{k-t}(H)$, and  the number of the ones which contain both $v_t$ and $w$ is $\sum_{i=0}^{t-1}{t-1 \choose i}d_{k-2-i}(H)$. Hence,
$$d_k(G,v_t)=d_{k-t}(H)+\sum_{i=0}^{t-1}{t-1 \choose i}d_{k-2-i}(H).$$
Since $d_k(G)=d_k(G,v_t)+d_k(G,\overbar{v_t})$, we have
$$d_k(G)=d_{k-t}(H)+\sum_{i=0}^{t-1}{t-1 \choose i}d_{k-2-i}(H)+\sum_{i=0}^{t-1}{t-1 \choose i}d_{k-1-i}(H).$$
We write the latter as a polynomial equation as follows:
$$D_G(x)=x^tD_H(x)+\sum_{i=0}^{t-1}{t-1\choose i} \left[x^{i+2}+x^{i+1}\right]D_H(x)$$
and differentiating $D_G(x)$, we get
$$D'_G(x)=tx^{t-1}D_H(x)+x^tD'_H(x)+\sum_{i=0}^{t-1}{t-1\choose i}\Big(\left[(i+2)x^{i+1}+(i+1)x^i\right]D_H(x)+\left[x^{i+2}+x^{i+1}\right]D'_H(x)\Big).$$
We evaluate both $D(G,x)$ and $D'(G,x)$ at $x=1$ and obtain that
$$D_G(1)=D_H(1)+\sum_{i=0}^{t-1}2{t-1\choose i}D_H(1)$$
and
$$D'_G(1)=tD_H(1)+D'_H(1)+\sum_{i=0}^{t-1}{t-1\choose i}\left[(2i+3)D_H(1)+2D'_H(1)\right].$$
Now we shall consider two cases:

\noindent {\it Case 1:} $1\leq t \leq 2$. In this case we calculate that  $$2nD_G(1)-3D'_G(1)=(2t+1)\left[2(n-1-t)D_H(1)-3D'_H(1)\right]\geq 0.$$ So,  $2nD_G(1)\geq 3D'_G(1)$ holds with equality if and only if $2(n-t-1)D_H(1)=3D'_H(1)$.

\noindent {\it Case 2:} $t\geq 3$.  In this case we shall prove the strict inequality $3D'_G(1)<2nD_G(1)$. By the assumption, we have  $3D'_H(1)\leq 2(n-t-1)D_H(1)$ and so, it suffices to show that
$$tD_H(1)+6\sum_{i=0}^{t-1}{t-1\choose i}D'_H(1)< 2D_H(1)+\sum_{i=0}^{t-1}{t-1\choose i}[4n-6i-9]D_H(1).$$

Note that ${t-1\choose i}={t-1\choose t-1-i}$. First suppose that $t-1$ is odd. Then,

\begin{eqnarray*}
\sum_{i=0}^{t-1}{t-1\choose i}[4n-6i-9]D_H(1) &=& \sum_{i=0}^{(t-2)/2}{t-1\choose i}[8n-6t-12] D_H(1)\\
&= &   \sum_{i=0}^{(t-2)/2}{t-1\choose i}[8(n-t-1)+(2t-4)] D_H(1)\\
&\geq & \sum_{i=0}^{(t-2)/2}12{t-1\choose i}D_H'(1)+\sum_{i=0}^{(t-2)/2}{t-1\choose i}(2t-4)D_H(1)\\
&=&6\sum_{i=0}^{t-1}{t-1\choose i}D_H'(1)+\sum_{i=0}^{(t-2)/2}{t-1\choose i}(2t-4)D_H(1)\\
&>&6\sum_{i=0}^{t-1}{t-1\choose i}D_H'(1)+tD_H(1).
\end{eqnarray*}
Now suppose that $t-1$ is even.  Then,
$$\sum_{i=0}^{t-1}{t-1\choose i}[4n-6i-9]={t-1\choose (t-1)/2}[4n-3t-6]+\sum_{i=0}^{(t-3)/2}{t-1\choose i}[8n-6t-12]$$ and
$$6\sum_{i=0}^{t-1}{t-1\choose i}=6{t-1\choose (t-1)/2}+\sum_{i=0}^{(t-3)/2}12{t-1\choose i}.$$
By the assumption that $3D'_H(1)\leq 2(n-t-1)D_H(1)$, it is easy to see that
$$\displaystyle \sum_{i=0}^{(t-3)/2}{t-1\choose i}[8n-6t-12]D_H(1)>\sum_{i=0}^{(t-3)/2}12{t-1\choose i}D'_H(1)$$ and 
$$2D_H(1)+{t-1\choose (t-1)/2}[4n-3t-6]D_H(1)>6{t-1\choose (t-1)/2}D'_H(1)+tD_H(1).$$
\end{proof}

Given a graph $G$  with a specified vertex $u\in V(G)$, we write $G_{(u,k)}$ to denote the graph obtained by gluing $G$ and  $K_{k+1}$ at the vertex $u$. That is, $G_{(u,k)}=K_{k+1}\cup G$ and $K_{k+1}\cap G=\{u\}$.

\begin{lemma}\label{glue_lem} Let $G$ be a graph and $u\in V(G)$. Then, for every integer $k\geq 1$,
$$D_{G_{(u,k)}}(x)=(x+1)^{k-1}\left[D_{G_{(u,1)}}(x)+D_{G\setminus u}(x)\right]-D_{G\setminus u}(x).$$
\end{lemma}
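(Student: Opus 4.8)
The plan is to establish a one-step recursion in $k$ and then solve it. Write $G_{(u,k)}=G\cup K_{k+1}$ where the glued clique has vertex set $\{u,w_1,\dots ,w_k\}$, and abbreviate $H_k=G_{(u,k)}$ and $A(x)=D_{G\setminus u}(x)$ (the deletion taken inside $G$). For $k\geq 2$ the two clique vertices $w_k$ and $w_{k-1}$ satisfy $N_{H_k}[w_k]=N_{H_k}[w_{k-1}]=\{u,w_1,\dots ,w_k\}$, so in particular $N_{H_k}[w_k]\subseteq N_{H_k}[w_{k-1}]$ and Lemma~\ref{recur} applies with $v=w_k$ and $u=w_{k-1}$.

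Next I would identify the three graphs appearing on the right-hand side of Lemma~\ref{recur} for this choice. Every neighbor of $w_{k-1}$ in $H_k$ lies in the clique $\{u,w_1,\dots ,w_k\}$, so the neighbors of $w_{k-1}$ are pairwise adjacent; hence the contraction $H_k/w_{k-1}$ adds no new edges and equals $H_k\setminus w_{k-1}$, which in turn is exactly $G_{(u,k-1)}=H_{k-1}$. Likewise $H_k\setminus N_{H_k}[w_{k-1}]=H_k\setminus\{u,w_1,\dots ,w_k\}=G\setminus u$. Substituting these identifications into Lemma~\ref{recur} gives, for every $k\geq 2$,
\[
D_{H_k}(x)=xD_{H_{k-1}}(x)+D_{H_{k-1}}(x)+xA(x)=(x+1)D_{H_{k-1}}(x)+xA(x).
\]

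Finally I would solve this recursion by induction on $k$. For $k=1$ the claimed identity reads $D_{H_1}(x)=(x+1)^0\bigl[D_{G_{(u,1)}}(x)+A(x)\bigr]-A(x)=D_{G_{(u,1)}}(x)$, which holds since $H_1=G_{(u,1)}$. Assuming the formula for $k-1$, multiply it by $(x+1)$, add $xA(x)$, and use $-(x+1)A(x)+xA(x)=-A(x)$ to obtain the formula for $k$. I do not expect a genuine obstacle; the only points requiring care are verifying that the contraction introduces no new edges (so that $H_k/w_{k-1}$ really is $H_{k-1}$) and noting that it is the general "nested closed neighborhood" form of Lemma~\ref{recur}, not its leaf special case, that is being invoked, since $w_k$ fails to be a leaf once $k\geq 2$.
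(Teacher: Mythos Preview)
Your proposal is correct and follows essentially the same approach as the paper: both proofs apply Lemma~\ref{recur} to two clique vertices with identical closed neighborhoods to derive the one-step recursion $D_{G_{(u,k)}}(x)=(x+1)D_{G_{(u,k-1)}}(x)+xD_{G\setminus u}(x)$, and then finish by induction on $k$. The only cosmetic difference is that you contract $w_{k-1}$ while the paper contracts $v_k$, which is immaterial since the two vertices are interchangeable.
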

\begin{proof} We proceed by induction on $k$. The statement is clear  for $k=1$. Suppose that $k\geq 2$ and let $V(G_{(u,k)})\setminus V(G)=\{v_1,\dots , v_k\}$ where the vertices $v_1,\dots ,v_k$ induce a  $k$-clique in $G_{(u,k)}$ and $u$ is adjacent to each $v_i$ for $1\leq i \leq k$. Since $v_k$ and $v_{k-1}$ have the same closed neighborhoods in $G_{(u,k)}$, we obtain
\begin{eqnarray*}
D_{G_{(u,k)}}(x) &=& x D_{G_{(u,k)}/ v_k}(x)+D_{G_{(u,k)}\setminus v_k}(x)+ xD_{G_{(u,k)}\setminus N_{G_{(u,k)}}[v_k]}(x)\\
&=& xD_{G_{(u,k-1)}}(x)+D_{G_{(u,k-1)}}(x)+xD_{G\setminus u}(x)\\
&=& (x+1)D_{G_{(u,k-1)}}(x)+xD_{G\setminus u}(x)\\
&=& (x+1)\left[(x+1)^{k-2}\left(D_{G_{(u,1)}}(x)+D_{G\setminus u}(x)\right)-D_{G\setminus u}(x)\right]+xD_{G\setminus u}(x)\\
&=& (x+1)^{k-1}\left[D_{G_{(u,1)}}(x)+D_{G\setminus u}(x)\right]-D_{G\setminus u}(x).
\end{eqnarray*}
\end{proof}

\begin{lemma}\label{3_item_lemma} Let $T$ be a tree with $|V(T)|\geq 3$ and $u$ be a vertex of $T$. Suppose that $u$ is not a support vertex of $T$ and $u$ has at most one neighbor in $T$ which  is not a support vertex in $T$. Let $G_1$ be the graph obtained from $T$ by attaching a new leaf vertex $v$ at $u$. Then,
\begin{itemize}
\item[(i)] $D_{G_1}(1)\leq D_T(1)+3D_{T\setminus u}(1)$,
\item [(ii)] $D_{G_1}(1)\leq 5D_{T\setminus u}(1)$ and
\item[(iii)] $D_T(1)\leq 3D_{T\setminus u}(1)$.
\end{itemize}
\end{lemma}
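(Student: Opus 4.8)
The plan is to reduce all three bounds to a single inequality. Write $p$ for the number of dominating sets of $T$ that contain $u$ (so the number of dominating sets of $T$ that avoid $u$ is $D_T(1)-p$). Everything will follow once we know
$$p\le 2D_{T\setminus u}(1),$$
and this is the only place the hypotheses on $u$ will be used.

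First I would establish the identity $D_{G_1}(1)=2p+D_{T\setminus u}(1)$ by sorting the dominating sets $S$ of $G_1$ according to which of the new leaf $v$ and its neighbour $u$ they contain. If $v\notin S$ then $u\in S$ is forced (as $v$ is a leaf) and $S$ is exactly a dominating set of $T$ containing $u$; if $v,u\in S$ then $S\setminus\{v\}$ is again a dominating set of $T$ containing $u$; and if $v\in S$ but $u\notin S$ then $v$ already dominates $u$ and $S\setminus\{v\}$ is exactly a dominating set of $T\setminus u$. This yields $p+p+D_{T\setminus u}(1)$. I would also record the trivial bound $D_T(1)-p\le D_{T\setminus u}(1)$, since every dominating set of $T$ avoiding $u$ is a dominating set of $T\setminus u$. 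Granting $p\le 2D_{T\setminus u}(1)$: part (ii) is immediate from the identity; part (iii) follows from $D_T(1)=p+(D_T(1)-p)\le 2D_{T\setminus u}(1)+D_{T\setminus u}(1)$; and part (i) follows from the rewriting $D_{G_1}(1)=D_T(1)+p+D_{T\setminus u}(1)-(D_T(1)-p)$ together with $p\le 2D_{T\setminus u}(1)$ and $D_T(1)-p\ge 0$.

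For $p\le 2D_{T\setminus u}(1)$ I would use the tree structure at $u$. Let $w_1,\dots,w_d$ be the neighbours of $u$ and $C_1,\dots,C_d$ the components of $T\setminus u$ with $w_i\in V(C_i)$; since $u$ is not a support vertex, no $w_i$ is a leaf of $T$. A set $\{u\}\cup S$ with $S\subseteq V(T)\setminus\{u\}$ dominates $T$ precisely when $S\cap V(C_i)$ dominates $V(C_i)\setminus\{w_i\}$ inside $C_i$ for every $i$, because $u$ alone dominates $u$ and all the $w_i$ and no edge of $T$ joins distinct $C_i$. Hence $p=\prod_i a_i$ and $D_{T\setminus u}(1)=\prod_i D_{C_i}(1)$, where $a_i$ is the number of subsets of $V(C_i)$ dominating $V(C_i)\setminus\{w_i\}$ in $C_i$. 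Now two facts about an arbitrary tree $C$ with a vertex $w$: (a) a subset of $V(C)$ dominating $V(C)\setminus\{w\}$ either dominates $w$ as well (there are $D_C(1)$ of these) or is disjoint from $N_C[w]$, and is then a dominating set of the induced subgraph $C\setminus w$; since $D_{C\setminus w}(1)\le D_C(1)$ this gives at most $2D_C(1)$ altogether, so $a_i\le 2D_{C_i}(1)$ always; (b) if $w$ is a support vertex of $C$, a leaf neighbour of $w$ forces every such subset to dominate $w$, so $a_i=D_{C_i}(1)$. Finally, a support vertex $w_i$ of $T$ is still a support vertex of $C_i$ unless its only leaf neighbour is $u$ itself, which makes $u$ a leaf and hence $d=1$. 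Using the hypothesis that at most one $w_i$ is a non-support vertex of $T$, all but at most one of the factors $a_i$ therefore equal $D_{C_i}(1)$, while the exceptional one is at most $2D_{C_i}(1)$; multiplying gives $p\le 2D_{T\setminus u}(1)$.

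I expect the three-way count and the bookkeeping reducing (i)--(iii) to the one inequality to be routine. The point that needs care, and which I regard as the main (if small) obstacle, is the transfer of ``being a support vertex'' from $T$ to the component $C_i$, together with the degenerate case $d=1$ where $u$ is itself a leaf --- there fact (a) alone settles the single factor.
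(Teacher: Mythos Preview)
Your argument is correct. It differs in organization from the paper's proof and is, if anything, cleaner.

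The paper partitions $\mathcal{D}(G_1)$ (and similarly $\mathcal{D}(T)$) into the sets $\mathcal{A}$ whose restriction to $T\setminus u$ already dominates $T\setminus u$, and the remainder $\mathcal{B}$; it then bounds $|\mathcal{B}_{G_1}|$ and $|\mathcal{B}_T|$ by building explicit injections into $\mathcal{D}(T)$ or $\mathcal{D}(T\setminus u)$, treating (i), (ii), (iii) with three separate estimates. You instead partition $\mathcal{D}(G_1)$ by membership of $u$ and $v$, obtain the exact identity $D_{G_1}(1)=2p+D_{T\setminus u}(1)$, and reduce all three inequalities to the single bound $p\le 2D_{T\setminus u}(1)$, which you prove by the multiplicative factorization $p=\prod_i a_i$, $D_{T\setminus u}(1)=\prod_i D_{C_i}(1)$ over the components of $T\setminus u$. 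The advantage of your route is uniformity: once the key inequality is in hand, (i)--(iii) drop out with no further casework. The paper's injection approach, on the other hand, is more hands-on and does not need the product decomposition or the subgraph bound $D_{C\setminus w}(1)\le D_C(1)$. Your handling of the degenerate case $d=1$ (where $u$ is itself a leaf and the ``support vertex in $T$'' property of $w_1$ need not transfer to $C_1$) is correct: fact~(a) alone gives $a_1\le 2D_{C_1}(1)$ there.
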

\begin{proof} Given a graph $G$ containing $T\setminus u$ as a subgraph, let us first define $\mathcal{A}_G$ and $\mathcal{B}_G$  as follows:  $\mathcal{A}_G=\{S:\, S\in \mathcal{D}(G) \ \text{and} \ S\cap V(T\setminus u) \in \mathcal{D}(T\setminus u)\}$ and $\mathcal{B}_G=\mathcal{D}(G)\setminus \mathcal{A}_G$. It is clear  that $D_{G}(1)=|\mathcal{A}_G|+|\mathcal{B}_G|$. By adding $v$, $u$ or both, one can extend every dominating set of $T\setminus u$ to a dominating set of $G_1$ in three different ways. So, we have $|\mathcal{A}_{G_1}|=3D_{T\setminus u}(1)$. If all neighbors of $u$ in $T$ are support vertices in $T$, then $\mathcal{B}_{G_1}=\emptyset$, and the results in $(i)$ and $(ii)$ are clear as $D_{G_1}(1)=3D_{T\setminus u}(1)$ in this case.  So, we may assume that $u$ is adjacent to  exactly one non-support  vertex in $T$, say $u'$. Let $T'$ be the component of $T\setminus u$ which contains $u'$ and let $T^{*}=(T\setminus u)\setminus T'$. Note that if $S$ belongs to $\mathcal{B}_{G_1}$, then $S\cap V(T^{*})$ is a dominating set of $T^{*}$ since every neighbor of $u$ in $T^{*}$ is a support vertex of $T$.  Also, $u$ must be in $S$ and $S$ contains no vertex of $N_{T'}[u']$. Now, the function $f: \mathcal{B}_{G_1}\rightarrow \mathcal{D}(T)$ defined by 
$$
f(S)=
\begin{cases}
S\ \text{if} \ v\notin S\\
(S\setminus \{u,v\})\cup \{u'\}\ \text{if}\ v\in S
\end{cases}
$$
is one-to-one. Hence, $|\mathcal{B}_{G_1}|\leq D_T(1)$ and the result in (i) follows. Let $\lambda (T'\setminus u')$ be the number of dominating sets of $T'\setminus u'$ which do not contain any vertex from $N_{T'}(u')$. Observe that 
\begin{center}
$|\mathcal{B}_{G_1}|=2D_{T^{*}}(1)\lambda(T'\setminus u')\leq 2D_{T^{*}}(1)D_{T'\setminus u'}(1)\leq 2D_{T^{*}}(1)D_{T'}(1)=2D_{T\setminus u}(1)$
\end{center}
and therefore (ii) follows. Lastly,  $|\mathcal{A}_T|\leq 2 D_{T\setminus u}(1)$ because adding the vertex $u$ into a dominating set of $T\setminus u$ may or may not yield a dominating set for $T$.  Moreover, if $S\in \mathcal{B}_T$, then $u$ must be in $S$ and $S$ contains no vertex of $N_{T'}[u']$. Now the function $g: \mathcal{B}_T\rightarrow \mathcal{D}(T\setminus u)$ defined by $g(S)=(S\setminus u)\cup \{u'\}$ is one-to-one and therefore we obtain that $|\mathcal{B}_T|\leq D_{T\setminus u}(1)$. Thus, $D_T(1)=|\mathcal{A}_T|+|\mathcal{B}_T|\leq 3D_{T\setminus u}(1)$ and (iii) is established.
\end{proof}

We are now ready to prove our main result.

\begin{theorem}\label{main} Let $G$ be a forest on $n$ vertices with no isolated vertices. Then, $\operatorname{avd}(G)\leq \frac{2n}{3}$ and, moreover, equality holds if and only if every non-leaf vertex of $G$ is a support vertex with one or two leaf neighbors.
\end{theorem}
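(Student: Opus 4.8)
The plan is to prove the statement by strong induction on $n$, reducing to the connected case first. Since $\operatorname{avd}$ and $D(1)$ both split over connected components (and the claimed characterization is componentwise), it suffices to prove: if $T$ is a tree on $n\ge 2$ vertices, then $3D_T'(1)\le 2nD_T(1)$, with equality iff every non-leaf vertex of $T$ is a support vertex with one or two leaf neighbors. The inequality $\operatorname{avd}(T)=D_T'(1)/D_T(1)\le 2n/3$ is then immediate, and the extremal graphs were already verified in the Preliminaries to achieve the bound. I would set up the induction so that the inductive hypothesis is exactly the hypothesis feeding Lemma~\ref{support_lemma}, namely $3D_H'(1)\le 2|V(H)|D_H(1)$ for forests $H$ with no isolated vertices (handling the empty forest $H$ on $0$ vertices separately, where $D_H(1)=1$, $D_H'(1)=0$).

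The core of the argument is a well-chosen vertex to which Lemma~\ref{support_lemma} or Lemma~\ref{3_item_lemma} applies. I would root $T$ and take a deepest support vertex $w$: a support vertex all of whose non-leaf neighbors, except possibly its parent, lie on paths that have already "bottomed out." More precisely, I would pick $w$ maximizing distance from the root among support vertices, so that every neighbor of $w$ other than its parent is a leaf, and I would further want every non-leaf neighbor of $w$ (which can only be the parent) to itself be a support vertex — if not, I push the choice one level up. When $w$ together with its leaves can be stripped so that $H=T\setminus L_T[w]$ has no isolated vertices and every vertex of $N_T(w)\setminus L_T(w)$ is a support vertex of $T$, Lemma~\ref{support_lemma} applies directly with the inductive hypothesis on $H$, yielding $3D_T'(1)\le 2nD_T(1)$ and the equality analysis: equality forces $t=|L_T(w)|\in\{1,2\}$ and equality for $H$, which by induction means every non-leaf vertex of $H$ is a support vertex with one or two leaf neighbors; one then checks this lifts back to $T$.

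The main obstacle, and where Lemma~\ref{3_item_lemma} enters, is the case where removing $L_T[w]$ creates an isolated vertex or a non-support neighbor — equivalently, when near the "top" of $T$ there is a non-support vertex $u$ with at most one non-support neighbor (the configuration in Lemma~\ref{3_item_lemma}). Here I would instead isolate such a $u$, write $T=G_1$ as in Lemma~\ref{3_item_lemma} (so $T\setminus v$ plays the role of "$T$" there, or more directly one views $T$ as obtained by attaching a leaf; in practice one argues directly on the polynomial identity $D_T(x)=x[D_{T/u}(x)+D_{T\setminus\{u,\cdot\}}(x)+D_{T\setminus N_T[u]}(x)]$ from Lemma~\ref{recur}). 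Using parts (i)–(iii) of Lemma~\ref{3_item_lemma} together with the inductive bounds on the smaller forests $T\setminus u$, $T\setminus N_T[u]$, etc., one derives $3D_T'(1)<2nD_T(1)$ strictly — the point being that a non-support non-leaf vertex always makes the inequality strict, consistent with the characterization. Assembling these cases so that every tree falls into exactly one, and tracking the equality conditions carefully through Lemma~\ref{support_lemma} in the boundary cases $t\in\{1,2\}$, is the delicate bookkeeping; the inequalities themselves are then routine consequences of the three lemmas and the domination-polynomial identities already established.
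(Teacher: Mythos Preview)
Your overall strategy matches the paper's: strong induction on $n$, reduction to trees via additivity of $\operatorname{avd}$, rooting and picking a deepest support vertex $v$ (your $w$), then splitting according to whether the parent $u$ is itself a support vertex (where Lemma~\ref{support_lemma} plus the inductive hypothesis on $H=G\setminus L_G[v]$ finishes) or not. The gap is in your treatment of the hard case.

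First, the suggestion to ``push the choice one level up'' when the parent is not a support vertex does not work: moving from $v$ to $u$ destroys the property that every non-parent neighbor is a leaf, so you cannot iterate into a configuration where Lemma~\ref{support_lemma} applies. The paper keeps $v$ fixed and argues directly. Second, you propose applying Lemma~\ref{recur} at $u$, but in the hard case $u$ has no leaf neighbor, so the leaf form of that lemma is not available there. The paper instead applies Lemma~\ref{recur} at the support vertex $v$ with its leaf $v_k$, giving
\[
D_G(x)=x\bigl[D_{G/v}(x)+D_{G\setminus\{v,v_k\}}(x)+D_{G\setminus N_G[v]}(x)\bigr].
\]
The step you are missing is the identification $G/v\cong T_{(u,k)}$, the tree $T=G\setminus L_G[v]$ with a $(k{+}1)$-clique glued at $u$; this is precisely where Lemma~\ref{glue_lem} --- which you do not invoke --- enters, rewriting $D_{T_{(u,k)}}(x)$ in terms of $D_{G_1}(x)$ and $D_{T\setminus u}(x)$ with $G_1=G\setminus L_G(v)$. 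After differentiating and applying the inductive bounds to $G_1$, $T$, and $T\setminus u$, the whole question reduces to the single numerical inequality
\[
(k-2)\,D_T(1)\ \le\ 2^{k-2}\bigl[(k-3)\,D_{G_1}(1)+(k+5)\,D_{T\setminus u}(1)\bigr],
\]
and only then do parts (i)--(iii) of Lemma~\ref{3_item_lemma} finish the argument, via a case split $k=1,2,3$ and $k\ge 4$. This reduction is not routine; without the clique-gluing identity the three inequalities of Lemma~\ref{3_item_lemma} have no obvious way to assemble into $3D_G'(1)<2nD_G(1)$. Your equality analysis in the easy case is also incomplete: when $t\in\{1,2\}$ and Lemma~\ref{support_lemma} gives equality for $G$ iff equality holds for $H$, you still need that $u$ being a support vertex of $G$ with exactly $v$ as non-leaf child forces $u$ to have one or two leaf neighbors in $G$ --- this is fine, but the converse direction (that the described extremal trees always fall into this branch of the argument) should be stated.
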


\begin{proof} We proceed by strong induction on the number of vertices. If $n=2$, then $G\cong K_2$ and it is clear that $\operatorname{avd}(K_2)=4/3$. We may assume that $n\geq 3$. First suppose that $G$ is connected, that is $G$ is a tree. Let $G$ be a rooted tree and let $v$ be  a support vertex of  maximum distance from the root of $G$. Also let $L_G(v)=\{v_1, \dots , v_k\}$ for some $k\geq 1$. If $V(G)=L_G[v]$, then $G\cong K_{1,k}$ and it easy to see that $\operatorname{avd}(K_{1,k})\leq 2|V(G)|/3$ with equality iff $1\leq k\leq 2$. So we may assume that $V(G)\neq L_G[v]$. Let $T=G\setminus L_G[v]$ and $u\in V(T)$ be the parent of $v$ in $G$. Note that $N_G(v)=L_G(v)\cup\{u\}$, as the chosen support vertex $v$ is of maximum distance from the root of $G$. If $u$ is a support vertex in $G$, then the result follows from Lemma~\ref{support_lemma} and the induction hypothesis. So we may assume that $u$ has no leaf neighbors in $G$. Since $u$ is neither a leaf nor a support vertex of $G$, we have $|V(T)|\geq 3$. Now we shall show that the strict inequality $3D'_G(1)<2nD_G(1)$ holds. 
\begin{eqnarray}
D_G(x) &=& x\left[D_{G/v}(x)+D_{G\setminus \{v, v_k\}}(x)+D_{G\setminus N_G[v]}(x)\right]\\
&=& x\left[D_{T_{(u,k)}}(x)+x^{k-1}D_{T}(x)+D_{T\setminus u}(x)\right]\\
&=& x\left[(x+1)^{k-1}[D_{T_{(u,1)}}(x)+D_{T\setminus u}(x)]-D_{T\setminus u}(x)+x^{k-1}D_T(x)+D_{T\setminus u}(x) \right]\\
&=&x(x+1)^{k-1}\left[D_{G\setminus L_G(v)}(x)+D_{T\setminus u}(x)\right]+x^kD_{T}(x)
\end{eqnarray}

where ($1$) follows from Lemma~\ref{recur}; ($2$) holds as $G/v\cong T_{(u,k)}$, $G\setminus \{v,v_k\}\cong \overbar{K}_{k-1}\cupdot T$ and $G\setminus N_G[v]\cong T\setminus u$; ($3$) follows from Lemma~\ref{glue_lem}; and ($4$) follows since $T_{(u,1)}\cong G\setminus L_G(v)$. Let us write $G_1=G\setminus L_G(v)$, then
\begin{eqnarray*}
D'_G(x)&=& \left[(x+1)^{k-1}+(k-1)x(x+1)^{k-2}\right]\left[D_{G_1}(x)+D_{T\setminus u}(x)\right]\\
&&+x(x+1)^{k-1}\left[D'_{G_1}(x)+D'_{T\setminus u}(x)\right]+kx^{k-1}D_{T}(x)+x^kD'_{T}(x)
\end{eqnarray*}
It follows that
$D_G(1)=2^{k-1}\left[D_{G_1}(1)+D_{T\setminus u}(1)\right]+D_{T}(1)$ and 
$$D'_G(1)=(k+1)2^{k-2}\left[D_{G_1}(1)+D_{T\setminus u}(1)\right]+2^{k-1}\left[D'_{G_1}(1)+D'_{T\setminus u}(1)\right]+kD_{T}(1)+D'_{T}(1).$$
It is not difficult to calculate that $2nD_G(1)-3D'_{G}(1)$ is equal to\\

 $2^{k-1}\big[2(n-k)D_{G_1}(1)-3D'_{G_1}(1)\big]+2^{k-1}\left[2(n-k-2)D_{T\setminus u}(1)-3D'_{T\setminus u}(1)\right]$
 
 $+\left[2(n-k-1)D_T(1)-3D'_T(1)\right]+ 2^{k-2}\left[ (k-3)D_{G_1}(1)+(k+5)D_{T\setminus u}(1)\right]-(k-2)D_{T}(1)$.\\

Each of the subgraphs $T$, $G_1$ and $T\setminus u$ is a proper subforest of $G$ without isolated vertices. So, by the induction hypothesis, we have $3D'_T(1)\leq 2(n-k-1)D_{T}(1)$, $3D'_{G_1}(1)\leq 2(n-k)D_{G_1}(1)$ and $3D'_{T\setminus u}(1)\leq 2(n-k-2)D_{T\setminus u}(1)$. Therefore, in order to show that $3D'_G(1)\leq 2nD_G(1)$ it suffices to prove only that
\begin{equation}\label{main_ineq}
(k-2)D_{T}(1)\leq 2^{k-2}\left[ (k-3)D_{G_1}(1)+(k+5)D_{T\setminus u}(1)\right].
\end{equation}

Observe that $T\setminus u$ has at most  one connected component which is not a star graph because $v$ is of maximum distance from the root of $G$. So, the vertex $u$ has at most one neighbor in $T$ which is not a support vertex of $T$ and Lemma~\ref{3_item_lemma} applies here.
If $k=1$, the inequality \eqref{main_ineq} is $D_{G_1}(1)\leq D_{T}(1)+3D_{T\setminus u}(1)$ and this follows from Lemma~\ref{3_item_lemma}(i). If all neighbors of $u$ in $T$ are support vertices in $T$, then by the proof of Lemma~\ref{3_item_lemma}(i), we have $D_{G_1}(1)=3D_{T\setminus u}(1)$ and therefore strict inequality holds in \eqref{main_ineq} which implies $3D'_G(1)<2nD_G(1)$. If $u$ has a neighbor in $T$ which is not a support vertex of $T$, then  by the induction hypothesis the strict inequality $3D'_T(1)< 2(n-k-1)D_{T}(1)$ holds and again we get $3D'_G(1)<2nD_G(1)$. For $k\geq 2$, we shall show that \eqref{main_ineq} holds strictly.

 If $k=2$,  the strict inequality for \eqref{main_ineq} is $D_{G_1}(1)<7 D_{T\setminus u}(1)$ and this follows from Lemma~\ref{3_item_lemma}(ii). If $k=3$, it is $D_{T}(1)< 16D_{T\setminus u}$ and this is verified by Lemma~\ref{3_item_lemma}(iii). If $k\geq 4$, we have $(k-2)< 2^{k-2}(k-3)$ and $D_T(1)\leq D_{G_1}(1)$ since $T$ is a subgraph of $G_1$. Thus, $3D'_G(1)<2nD_G(1)$  is established for all $k$.

Lastly, suppose that $G$ is a disconnected forest with connected components $H_1,\dots , H_c$ where $c\geq 2$. By the induction hypothesis, for each $i$ we have $\operatorname{avd}(H_i)\leq \frac{2|V(H_i)|}{3}$ with equality iff every non-leaf vertex of $H_i$ is a support vertex with one or two leaf neighbors. Since $\operatorname{avd}(G)=\sum\limits_{i=1}^c\operatorname{avd}(H_i)\leq \sum\limits_{i=1}^c\frac{2|V(H_i)|}{3}=2n/3$, the proof is completed. 

\end{proof}

\section{Concluding remarks}

Every graph $G$ without isolated vertices contains a spanning forest $F$ without isolated vertices, and $F$ can be obtained from $G$ by a succession of non-pendant edge removals. So it would be interesting to investigate how $\operatorname{avd}(G)$ is effected by the removal of a non-pendant edge. In particular we ask the following:
\begin{question}\label{my_ques} In every graph $G$ (which is not a disjoint union of stars or  empty graphs) does there exists a non-pendant edge $e$ of $G$ such that $\operatorname{avd}(G)<\operatorname{avd}(G\setminus e)$?
\end{question}

Observe that an affirmative answer to Question~\ref{my_ques} would yield a proof of Conjecture~\ref{brown_conj} in general because of our Theorem~\ref{main} and the remark above. We also note that Beaton and Brown~\cite{brown_beaton} conjectured that in every non-empty graph $G$, there exists an edge $e$ such that $\operatorname{avd}(G)<\operatorname{avd}(G\setminus e)$ and they verified this conjecture for graphs on up to $7$ vertices. On the other hand, they did not specify a certain property of such existing edge.

%\section{Acknowledgements}

\vskip0.3in

\bibliographystyle{elsarticle-num}
%\bibliography{<your-bib-database>}

\end{document}